\theoremstyle{plain}
 \newtheorem{theorem}{Theorem}[section]
 \newtheorem{lemma}{Lemma}[section]
 \newtheorem{corollary}{Corollary}[section]
\theoremstyle{definition}
 \newtheorem{definition}{Definition}[section]
\renewcommand{\le}{\leqslant}
\renewcommand{\ge}{\geqslant}
\subjclass[2010]{30D05; 26A33; 30E20}
\keywords{Complex Analysis, Gamma Function, Recursion, Complex iterations, Hyper-operators}
\author[Nixon]{\bfseries James Nixon}
\address{
Toronto\\
Canada}
\email{james.nixon@mail.utoronto.ca}
\title[On The Indefinite Sum In Fractional Calculus]{On The Indefinite Sum In Fractional Calculus}
\newcommand{\I}{\frac{1}{2\pi i}\int_{\sigma-i\infty}^{\sigma+i\infty}}
\newcommand{\G}{\Gamma}
\newcommand{\D}[2]{\frac{d^{#2}}{{d#1}^{#2}}}
\newcommand{\T}{\bigtriangledown}
\newcommand{\E}{\mathcal{E}}
\begin{document}

\vspace{18mm} \setcounter{page}{1} \thispagestyle{empty}

\begin{abstract}

We present a theorem on taking the repeated indefinite summation of a holomorphic function $\phi(z)$ in a vertical strip of $\mathbb{C}$ satisfying exponential bounds as the imaginary part grows. We arrive at this result using transforms from fractional calculus. This affords us the ability to indefinitely sum more complicated functions than previously possible; such as holomorphic functions of order $n \in \mathbb{N}$ that have decay at plus or minus imaginary infinity. We then further investigate the indefinite summation operator by restricting ourselves to a space of functions of exponential type. We arrive at a second representation for the indefinite summation operator, equivalent to the first presented, and show we have defined a unique operator on this space. We develop a convolution using the indefinite sum that is commutative, associative, and distributive over addition. We arrive at a formula for the complex iterates of the indefinite sum (the differsum), using this convolution, that resembles the Riemann-Liouville differintegral. We close with a generalization of the Gamma function.

\end{abstract}

\maketitle

\section{Introduction}\label{sec1}

\setcounter{section}{1}
\setcounter{equation}{0}\setcounter{theorem}{0}

Indefinite summation is an operation in mathematics that has a long history. Many mathematicians have investigated the operator, including Ramanujan \cite{rf2} and Euler \cite{rf10}. The reasoning behind defining indefinite summation is to give mathematical sense to the analytic continuation of sums of a holomorphic function. As in, suppose $S(n) = \sum_{j=1}^n f(j)$ for $n \in \mathbb{N}$, what happens if we replace $n$ with $z \in \mathbb{C}$ and require $S$ be holomorphic? There exists techniques for generating the indefinite sum of holomorphic functions and each has its advantages and its flaws, where the majority of the results are accumulated using Bernoulli polynomials and the Laplace transform. In this paper we will distance ourselves from these methods and approach the problem with the differintegral. We will try to explain our best the advantages of our technique. 

The method we introduce for finding the indefinite sum of a holomorphic function employs a triple integral transform that comes from fractional calculus. Although clunky at first sight, many of the results work out cleanly and with fair ease, despite the mess of symbols. The advantage of our method, in comparison to Ramanujan's method, is that we achieve a more localized form of the operator on holomorphic functions; whereas Ramanujan's method requires the function be analytic in a half plane of $\mathbb{C}$, we only require it be analytic in a vertical strip of $\mathbb{C}$. Also, unlike Euler-Maclaurin and Ramanujan summation, our method of performing indefinite summation only requires knowledge of the function on a vertical line in $\mathbb{C}$ (or its values at the positive integers), and none of its derivatives are used in the formula. Our method also allows us to perform an unlimited number of indefinite summations in a single concise formula, which is much more cumbersome using Euler-Maclaurin summation or Ramanujan summation. Further, our method only requires an exponential bound as the imaginary part of the argument grows, this implies we can take the indefinite sum of functions of higher order like $e^{z^2}$ since it has decay at plus or minus imaginary infinity, which is not possible with Ramanujan and Euler-Maclaurin summation.

A large problem that needs to be addressed when investigating the indefinite sum is the uniqueness of a particular indefinite summation operator. By example, we can always add a one-periodic function to the indefinite sum and still get another indefinite sum \cite{rf2}. For this reason an additional criterion is added to the indefinite sum so that it satisfies uniqueness. We do not encounter this problem when defining indefinite summation with our transforms because adding a one-periodic function to our indefinite sum causes our integral expression to blow up at plus or minus imaginary infinity and it no longer converges. This ensures that our indefinite sum is unique, although it satisfies moderately stricter restrictions. 

We elaborate further on a series of results involving the indefinite summation operator that we can arrive at. We define a space we can perform indefinite summation in, and from this we develop a second representation of the indefinite sum equivalent to the first. This representation only requires knowledge of the indefinite sum at the positive integers. From here we are given open ground to talk freely about interesting formulae which arrive due to the recursive behaviour of the indefinite sum. 

We define a convolution involving the indefinite sum. It allows us to express the ``differsum'' in a formula that resembles the Riemann-Liouville differintegral very much. We arrive at these results by only observing the behaviour of the indefinite sum at the non-negative integers by using a convenient and powerful lemma. We close by first iterating the indefinite sum to an arbitrary complex number in the right half plane, and then the indefinite product of the Gamma function. This produces a generalized function that interpolates $z$ and $\G(z)$ while maintaining a recursive pattern.

With these formalities we introduce the indefinite summation operator.

\begin{definition}\label{df1}
If $\phi$ is holomorphic on open $\Omega\subset\mathbb{C}$ then an indefinite sum $\sum$ of $\phi$ is denoted $\sum_z \phi$ and is holomorphic on $\Omega$ and for $z,z+1 \in \Omega$ satisfies, 
$$\sum_z\phi + \phi(z+1) = \sum_{z+1} \phi$$
\end{definition}

We note the strongest result on indefinite summation in complex analysis that we know of, Ramanujan's method, which produces an indefinite sum for any holomorphic function $\phi$ of exponential type $\alpha < 2\pi$ (as in $|\phi(z)| < C e^{\alpha |z|}$) \cite{rf2}. So far we have not defined the indefinite sum quite rigorously, as issues of uniqueness need to be addressed, however, we are satisfied with this formal definition until we bring forth our analytic integral expression for the operator. 

We introduce the backwards difference operator. It appears in combinatorics and recursion quite frequently. Despite its simple definition, considering it for analytic functions proves trickier in comparison to the derivative operator (of which it is a discrete analogue). 

\begin{definition}\label{df2}
If $\phi(z)$ is holomorphic on open $\Omega$ and there is an open subset $\Omega'\subset \Omega$ such that $z \in \Omega'\, \Rightarrow\, z,z-1 \in \Omega$ then the backwards difference $\T_z$ of $\phi$ on $\Omega'$ is given by,
$$\T_z \phi = \phi(z) - \phi(z-1)$$
\end{definition}

The backwards difference operator has appeared in many places in mathematics. The operator has undergone much investigation, but now the concept--like the indefinite sum--in the field of calculus and analysis, exists in a niche.  The intertwining property of the indefinite sum and the backwards difference operator is simple $\sum \T = \T \sum = 1$ or $\sum = \T^{-1}$. Thus, these operators are inverses of each other.

Before progressing we introduce a function that is invaluable to our research, Euler's Gamma function.

\begin{definition}\label{df3}
The Gamma function $\G$ is a meromorphic function on the entire complex plane with poles at the nonpositive integers, for $z \in \mathbb{C},\,\Re(z) > 0$ represented by, 
\begin{equation}\label{Gamma}
\G(z) = \int_0^\infty e^{-t}t^{z-1}\,dt
\end{equation}
\end{definition}
		
		The Gamma function satisfies the functional relationship, $z\G(z) = \G(z+1)$ and converges to the factorial for natural values, $\G(n+1) = n!$. We have imaginary asymptotics of the Gamma function, which are,
		
\begin{equation}\label{asym}
|\G(\sigma + iy)| \sim \sqrt{2\pi} e^{-\sigma-\pi |y|/2} |y|^{\sigma + 1/2}\,\,\,\,\,y \to \pm\infty
\end{equation}

		There exists a plethora of material on the Gamma function and a quick search will give papers unlimited. 

Before introducing our results, we present our differintegral, which will be the hidden tool we will be applying throughout. For our purposes we only define the differintegral centered at zero.

\begin{definition}\label{df4}
Let $f(x)$ be holomorphic in the open sector $\Omega=\{x\in\mathbb{C}\,;\,\pi < \eta < \arg(-x) < \eta' < \pi\,;\,\eta,\eta'\in\mathbb{R},\, \eta'-\eta \le \pi\}$. Assume there exists $b \in \mathbb{R}^+$ dependent on $f$ such that for $\sigma \in \mathbb{R}^+,\,0 < \sigma < b$ and $\theta \in \mathbb{R}, \, \eta < \theta < \eta'$ we have $\int_0^\infty |f(-e^{i\theta}x)|x^{\sigma-1} \,dt < \infty$. Then, the Weyl differintegral centered  at zero $\D{x}{-z}\Big{|}_{x=0}$ of $f$ is represented for $z$, $0 < \Re(z) < b$,
\begin{equation}
\label{diff}
\D{x}{-z}\Big{|}_{x=0} f(x) = \frac{e^{i\theta z}}{\Gamma(z)} \int_{0}^\infty f(-e^{i\theta}x)x^{z-1}\,dx
\end{equation}
\end{definition}

	The differintegral as we've defined it is holomorphic in $z$. We can retrieve $f$ from its differintegral centered at zero by using Mellin's inversion theorem, 

$$f(-e^{i\theta}t) = \I \G(\xi)\Big{(}\D{x}{-\xi}\Big{|}_{x=0} f(x)\Big{)}e^{-i\theta \xi}t^{-\xi}\,d\xi$$

This result shall give us a lot to work with in our arsenal in order to handle the problem of producing an indefinite sum. The last operator we introduce is the Twisted Mellin Transform. This operator presents a strong relationship between differentiating a function and performing the backwards difference of the transformed function. The operator is introduced in \cite{rf11}, however we found it independently and write it in a slightly generalized form.

\begin{definition}\label{df5}
Suppose $f$ is holomorphic in the open sector $\Omega = \{ x\in \mathbb{C} \,;\, -\pi/2 < -\eta < \arg(x) < \eta < \pi/2\,;\,\eta \in \mathbb{R}^+\}$. Assume for $\sigma \in \mathbb{R}^+$ in the strip $b \in \mathbb{R}^+,\,0 < \sigma < b$ and $\theta \in \mathbb{R},\, -\eta < \theta < \eta$ we have $\int_0^\infty e^{-\cos(\theta)x}|f(e^{i\theta}x)|x^{\sigma - 1}\,dx<\infty$. Then, the Twisted Mellin transform $\mathcal{Y}$ of $f$ is represented for $0 < \Re(z) < b$,
$$\mathcal{Y}_zf=\D{x}{-z}\Big{|}_{x=0} e^x f(-x)$$
\end{definition}

The fundamental property of the Twisted Mellin transform is,
\begin{equation}\label{TMT}
\mathcal{Y}_z \frac{df}{dx} = \T_z \mathcal{Y}_z f
\end{equation}

which follows by, 
\begin{eqnarray*}
\D{x}{-z}\Big{|}_{x=0} [e^x f'(-x)] &=& \D{x}{-z}\Big{|}_{x=0} \Big{[}e^x f(-x) - \frac{d}{dx} e^x f(-x)\Big{]}\\
&=& \D{x}{-z}\Big{|}_{x=0} e^xf(-x) - \D{x}{-(z-1)}\Big{|}_{x=0} e^x f(-x)
\end{eqnarray*}

With these introductions we see we have enough to state our analytic continuation of the summation operator. We hope the brevity of our elaboration on the results outside of this paper we shall use does not affect its comprehension.

\section{The Analytic Continuation of the Summation Operator}\label{sec2}

\setcounter{section}{2}
\setcounter{equation}{0}\setcounter{theorem}{0}

We introduce a few lemmas on the absolute convergence of certain modified Mellin transforms. These lemmas shall follow quickly and are applied one after the other to the triple integral transform that will be our method of analytically continuing the summation operator. The first lemma is inspired by the Paley-Wiener theorem on the Fourier transform, where they show holomorphic functions bounded on a horizontal strip in $\mathbb{C}$ by $\frac{1}{1+x^2}$ have a Fourier transform bounded by an exponential in the same horizontal strip \cite{rf9}. Our result appears much more fitting to our differintegral however. We prove the result using the differintegral, but all it takes is some clever re-maneuvering to make it apply on the Mellin transform--which we leave to the interested reader.

\begin{lemma}\label{lma1}
 Let $\phi$ be holomorphic in the strip $0<\Re(z)=\sigma<b$ for $b,\sigma, \in \mathbb{R}^+$ and let $|\lim_{\Re(z) \to 0} \phi(z)| < \infty$. Suppose that $|\phi(z)| < Ce^{\alpha |\Im(z)|}$ for $\alpha \in \mathbb{R}, \,0 \le \alpha < \pi/2$ and $C \in \mathbb{R}^+$. Then $f(x) = \I \G(\xi)\phi(\xi)x^{-\xi}\,d\xi$ is holomorphic in the sector $\alpha - \pi/2 < \arg(x) < \pi/2 - \alpha$ and $\int_0^\infty |f(e^{i\theta}x)|x^{\sigma - 1}\,dx < \infty$ for $\theta \in \mathbb{R},\,\alpha - \pi/2 < \theta < \pi/2 - \alpha$.

Conversely, let $f$ be holomorphic in the sector $\alpha \in \mathbb{R},\,\alpha-\pi/2 < \arg(x) < \pi/2 - \alpha$.  Suppose for $\sigma \in \mathbb{R}^+$ in the strip $b \in \mathbb{R}^+,\,0 <\sigma < b$ and $\theta \in \mathbb{R},\,\alpha - \pi/2 < \theta < \pi/2 - \alpha$ we have $\int_0^\infty |f(e^{i\theta}x)|x^{\sigma-1} < \infty$. Then $\phi(z) = \frac{1}{\G(z)} \int_0^\infty f(x)x^{z-1}\,dx$ is holomorphic in the strip $0 < \Re(z) < b$ and $|\phi(z)| < M e^{\alpha|\Im(z)|}$ for $M \in \mathbb{R}^+$.
\end{lemma}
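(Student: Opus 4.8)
The plan is to exploit the Mellin/differintegral duality directly: the forward direction claims that exponential growth of $\phi$ along vertical lines, of rate $\alpha < \pi/2$, translates into $f = \I \G(\xi)\phi(\xi)x^{-\xi}\,d\xi$ being holomorphic in the sector $|\arg x| < \pi/2 - \alpha$ and absolutely integrable (against $x^{\sigma-1}\,dx$) on each ray in that sector; the converse reverses this. I would first establish that the defining contour integral for $f$ converges and is holomorphic. This is where the hypothesis $\alpha < \pi/2$ does its work: on the line $\Re\xi = \sigma$, the Gamma asymptotics \eqref{asym} give $|\G(\sigma+iy)| \sim \sqrt{2\pi}\,e^{-\sigma}e^{-\pi|y|/2}|y|^{\sigma-1/2}$, so $|\G(\xi)\phi(\xi)|$ decays like $e^{-(\pi/2-\alpha)|y|}|y|^{\sigma-1/2}$, which is integrable; and the factor $x^{-\xi} = x^{-\sigma}e^{-iy\log x}$ with $x = re^{i\theta}$ contributes $e^{\theta y}$, so along the ray $\arg x = \theta$ the integrand is dominated by $e^{-(\pi/2-\alpha-|\theta|)|y|}|y|^{\sigma-1/2}$, integrable precisely when $|\theta| < \pi/2 - \alpha$. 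Holomorphy in $x$ on the open sector then follows by differentiation under the integral sign (Morera/dominated convergence), uniformly on compact subsectors.

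Next I would prove the integrability claim $\int_0^\infty |f(e^{i\theta}x)|x^{\sigma-1}\,dx < \infty$. The natural route is to invoke Mellin inversion: by Definition~\ref{df4} and the inversion formula stated just after it, $f$ as constructed is exactly the Mellin-type reconstruction of a function whose differintegral centered at zero is $\phi$, so formally $\phi(z) = \frac{1}{\G(z)}\int_0^\infty f(x)x^{z-1}\,dx$. To make the integrability rigorous I would split the ray integral at $x=1$: for $x$ small one shifts the $\xi$-contour to the right (increasing $\Re\xi$ toward $b$) to gain a power $x^{-b+\epsilon}$ decay in the wrong direction, so instead one shifts left toward $\Re\xi = 0^+$ using $|\lim_{\Re(z)\to 0}\phi(z)| < \infty$ to control the boundary term, obtaining $f(e^{i\theta}x) = O(x^{-\epsilon})$ as $x\to 0$ for every $\epsilon>0$ after pushing the contour to $\Re\xi = \epsilon$; for $x$ large one shifts right toward $\Re\xi = b^-$, getting $f(e^{i\theta}x) = O(x^{-b+\epsilon})$. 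Both estimates are uniform for $\theta$ in a closed subsector. Multiplying by $x^{\sigma-1}$ with $0<\sigma<b$ then makes the integral converge at both ends. The converse direction is the mirror image: given the integrability hypotheses, define $\phi(z) = \frac{1}{\G(z)}\int_0^\infty f(x)x^{z-1}\,dx$, note holomorphy in the strip $0<\Re z<b$ is immediate from the integrability bounds and $1/\G$ being entire, and then estimate $|\phi(\sigma+iy)|$ by deforming the contour of integration from $\mathbb{R}^+$ to the ray $\arg x = \theta$ with $\theta$ chosen of the same sign as $-y$ and $|\theta|$ close to $\pi/2 - \alpha$ (legitimate because $f$ is holomorphic on the sector and the integrability hypothesis holds on each such ray, killing the arc contributions); on that ray $|x^{z-1}| = x^{\sigma-1}e^{-\theta y}$, so $|\int_0^\infty f(e^{i\theta}x)(e^{i\theta}x)^{z-1}\,dx| \le e^{-\theta y}\int_0^\infty |f(e^{i\theta}x)|x^{\sigma-1}\,dx$, and combining with $1/|\G(\sigma+iy)| \le M' e^{\pi|y|/2}|y|^{-\sigma+1/2}$ yields $|\phi(\sigma+iy)| \le M e^{(\pi/2 - |\theta|)|y|} \le M e^{\alpha|y|}$ after letting $|\theta| \to \pi/2 - \alpha$ (with a harmless adjustment of the constant, or keeping $|\theta| = \pi/2 - \alpha - \delta$ and absorbing $\delta$ into $\alpha$'s slack — here one uses that the hypothesis permits any rate up to but not including $\pi/2$, so actually one should state the converse with the same $\alpha$ and take $\theta \to \pi/2-\alpha$ with the bound degrading only in the multiplicative constant).

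The main obstacle I anticipate is the \emph{contour-shifting / deformation justification} at both ends of the ray integrals: one must verify that the horizontal (resp. circular-arc) pieces of the shifted contours vanish, which requires a uniform decay estimate on $\G(\xi)\phi(\xi)$ as $|\Im\xi| \to \infty$ that is locally uniform in $\Re\xi$ across the closed strip $[\epsilon, b-\epsilon]$ — this is where the hypothesis that the exponential bound on $\phi$ holds throughout the open strip (not merely on one line) is essential, so that $|\G(\xi)\phi(\xi)|$ still beats $e^{-(\pi/2-\alpha)|\Im\xi|}$ uniformly. A secondary subtlety is the behavior near $\Re\xi = 0$, where $\G(\xi)$ has its pole at $\xi=0$; the hypothesis $|\lim_{\Re(z)\to 0}\phi(z)|<\infty$ ensures the residue contribution is finite and, after the shift, produces at worst a bounded (or $\log$) term that is dominated by $x^{-\epsilon}$, so it does not obstruct integrability against $x^{\sigma-1}$ with $\sigma>0$. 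Everything else — holomorphy, differentiation under the integral, the elementary Gamma asymptotics — is routine once these deformations are in hand.
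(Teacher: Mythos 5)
Your proposal follows essentially the same route as the paper's proof: in the forward direction you bound the contour integral using the Gamma asymptotics \eqref{asym} against the $e^{\alpha|\Im\xi|}$ growth of $\phi$ and then slide the vertical line $\Re\xi=\sigma$ within the open strip to get $O(x^{-\epsilon})$ decay at $0$ and $O(x^{-b+\epsilon})$ at $\infty$, and in the converse you rotate the ray of integration and use the reciprocal Gamma asymptotics, exactly as the paper does. The only point worth noting is the endpoint $|\theta|=\pi/2-\alpha$ in the converse, which you flag honestly and the paper simply takes without comment, so your treatment is if anything slightly more careful on the same step.
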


\begin{proof}
We prove the first direction. Since $|\phi(z)| < Ce^{\alpha |\Im(z)|}$, for $\alpha - \pi/2 < \theta < \pi/2 - \alpha$ and $x \in \mathbb{R}^+$, $f(e^{i\theta} x) = \I \G(\xi)\phi(\xi)e^{-i\theta \xi} x^{-\xi}\,d\xi$. $f$ is holomorphic because $\frac{1}{2 \pi i} \int_{\sigma - in}^{\sigma + in} \G(\xi)\phi(\xi) x^{-\xi}\,d\xi \to \I \G(\xi)\phi(\xi) x^{-\xi}\,d\xi$ uniformly in $x$ in the open sector. We also know that $|f(e^{i\theta} x)| < \frac{x^{-\sigma}}{2 \pi} \int_{\sigma - i \infty}^{\sigma + i \infty} |\G(\xi)\phi(\xi)e^{-i\theta \xi}|\,dz = C_\sigma x^{-\sigma}$ where the integral is absolutely convergent again because of the bounds on $\phi$ and the asymptotics of the Gamma function \eqref{asym}. Now we can shift $\sigma$ in the open strip and this does not affect convergence so that $|f(e^{i\theta}x)| < Cx^{-\epsilon}$ as $x \to 0$ for every $\epsilon > 0$ and $|f(e^{i\theta}x)| < Cx^{-b}$ as $x \to \infty$ for $ C \in \mathbb{R}^+$ and $C = \sup_{\sigma \in [\epsilon,b]} \{\frac{1}{2\pi}\int_{\sigma - i\infty}^{\sigma + i\infty} |\G(\xi) \phi(\xi) e^{-i\theta \xi}|\,d\xi\}$. This ensures that $\int_0^\infty |f(e^{i\theta}x)|x^{\sigma -1 }\,dx < \infty$ for $0 < \sigma < b$ and $\alpha - \pi/2 < \theta < \pi/2 - \alpha$.

For the other direction we have $$\G(z)\phi_\theta(z) = \int_0^\infty f(e^{i\theta} x) x^{z-1}\,dx =  e^{-i\theta z}\int_0^\infty f(x)x^{z-1}\,dx = e^{-i\theta z}\G(z)\phi_0(z)$$ which follows by contour integration. Now take the absolute value to see that $|\phi_0(z)| < C_\sigma\frac{ e^{\theta \Im(z)}}{|\G(z)|}$ where $\alpha -\pi/2< \theta < \pi/2 - \alpha$ and $C_\sigma = \int_0^\infty |f(e^{i\theta}x)|x^{\sigma - 1}\,dx$. By the asymptotics of the Gamma function \eqref{asym}, $|\phi_0(z)| < M e^{(\pi/2 - \theta)|\Im(z)|}$ which by taking $|\theta| = \pi/2 - \alpha$ we get $|\phi_0(z)| < Me^{\alpha|\Im(z)|}$, where $\phi_0 = \phi$ and $M \in \mathbb{R}^+$. We note we have dropped the term $|y|^{-\sigma-1/2}$ in the inverse Gamma function's asymptotics because it decays to zero as $y \to\pm \infty$. 
\end{proof}

What we have just shown is subtle but very useful. We have not seen it proved before and so we decided to show a proof--though we have our suspicions a proof is already out there. It shows we have an isomorphism between spaces of holomorphic functions (the space $f$ is in to the space $\phi$ is in) which is something we can manipulate freely. Our next lemma comes from asymptotics for the incomplete Gamma function and is the core of the convergence of our triple integral.

\begin{lemma}\label{lma2}
Let $f(x)$ be holomorphic in the sector $\Omega = \{x \in \mathbb{C}\,;\, \alpha - \pi/2 < \arg(x) < \pi/2 -\alpha\}$. Suppose for $\sigma\in \mathbb{R}^+$ in the strip $b\in \mathbb{R}^+,\,0 < \sigma < b$ and $\theta \in \mathbb{R},\,\alpha-\pi/2 < \theta <\pi/2 - \alpha$ we have $\int_0^\infty |f(e^{i\theta}x)|x^{\sigma - 1}\,dx < \infty$. Then for $\lambda \in \mathbb{R}^+$ and $0 < \sigma < b$, $$\int_0^\infty e^{-\lambda t}t^{\sigma-1} \Big{(}\int_0^t e^{ \lambda x} |f(e^{i\theta}x)|\,dx \Big{)}\,dt < \infty$$
\end{lemma}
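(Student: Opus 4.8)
The plan is to interchange the order of integration and then reduce everything to a single Mellin-type integral of $|f(e^{i\theta}x)|$ against a weight that we can bound uniformly. First I would write the double integral over the triangular region $\{0 < x < t < \infty\}$ and apply Tonelli's theorem (all integrands are nonnegative), swapping to integrate in $t$ first:
\begin{equation*}
\int_0^\infty e^{-\lambda t} t^{\sigma-1} \Big(\int_0^t e^{\lambda x} |f(e^{i\theta}x)|\,dx\Big)\,dt = \int_0^\infty e^{\lambda x} |f(e^{i\theta}x)| \Big(\int_x^\infty e^{-\lambda t} t^{\sigma-1}\,dt\Big)\,dx.
\end{equation*}
The inner integral is (up to the factor $\lambda^{-\sigma}$) the upper incomplete Gamma function $\Gamma(\sigma,\lambda x)$, so the whole expression equals $\lambda^{-\sigma}\int_0^\infty e^{\lambda x} |f(e^{i\theta}x)| \Gamma(\sigma,\lambda x)\,dx$. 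The task is therefore to show this last integral converges given only that $\int_0^\infty |f(e^{i\theta}x)| x^{\rho-1}\,dx < \infty$ for every $\rho \in (0,b)$.

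Next I would handle the behaviour near $x = 0$ and near $x = \infty$ separately using the two classical asymptotics of the incomplete Gamma function. As $x \to 0^+$ we have $\Gamma(\sigma,\lambda x) \to \Gamma(\sigma)$, which is a finite constant, and $e^{\lambda x} \to 1$; so near zero the integrand behaves like a constant times $|f(e^{i\theta}x)|$, and convergence there follows from the hypothesis with any $\rho \le 1$ (e.g. the $x^{\sigma-1}$ bound from the proof of Lemma~\ref{lma1}, which gave $|f(e^{i\theta}x)| < Cx^{-\epsilon}$ near $0$). The delicate region is $x \to \infty$: here $e^{\lambda x}$ blows up, so the crucial point is that $\Gamma(\sigma,\lambda x) \sim (\lambda x)^{\sigma-1} e^{-\lambda x}$ as $x \to \infty$, and the exponential $e^{-\lambda x}$ exactly cancels the troublesome $e^{\lambda x}$. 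Thus for large $x$ the integrand is asymptotic to a constant multiple of $|f(e^{i\theta}x)| x^{\sigma-1}$, and since $\sigma < b$ we may pick $\rho = \sigma$ (or push $\rho$ up toward $b$) and invoke the hypothesis $\int^\infty |f(e^{i\theta}x)| x^{\sigma-1}\,dx < \infty$. Concretely I would fix a cutoff $R$, bound $\Gamma(\sigma,\lambda x) \le K (\lambda x)^{\sigma-1} e^{-\lambda x}$ for $x \ge R$ with an explicit constant $K = K(\sigma)$, bound $e^{\lambda x}\Gamma(\sigma,\lambda x) \le \Gamma(\sigma)$ for $x \le R$ (since $e^{\lambda x}\Gamma(\sigma,\lambda x)$ is decreasing in $x$, or just crudely by $e^{\lambda R}\Gamma(\sigma)$), and add the two finite pieces.

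The main obstacle, such as it is, is purely a matter of pinning down the incomplete Gamma asymptotics with honest uniform constants rather than mere $\sim$ statements — in particular verifying that $e^{\lambda x}\Gamma(\sigma,\lambda x)$ is genuinely controlled by $Cx^{\sigma-1}$ on all of $[R,\infty)$ and not just asymptotically, which one gets from the standard integral representation $\Gamma(\sigma,u) = \int_u^\infty e^{-s}s^{\sigma-1}\,ds$ by the substitution $s = u + r$ and the elementary bound $(u+r)^{\sigma-1} \le u^{\sigma-1}$ when $\sigma \le 1$ (and $(u+r)^{\sigma-1} \le 2^{\sigma-1}(u^{\sigma-1} + r^{\sigma-1})$ when $\sigma > 1$). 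Everything else is Tonelli plus the convergence hypothesis transported across $\sigma \in (0,b)$ exactly as in Lemma~\ref{lma1}. I do not expect any serious difficulty; the lemma is really the statement ``$e^{-\lambda t}$ beats the accumulated mass $\int_0^t e^{\lambda x}|f|\,dx$,'' and the incomplete Gamma function is just the clean way to bookkeep that cancellation.
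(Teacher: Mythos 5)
Your proposal is correct and follows essentially the same route as the paper: swap the order of integration over $\{0<x<t<\infty\}$ (the paper phrases this as integration by parts) and bound the resulting tail factor $\int_x^\infty e^{-\lambda t}t^{\sigma-1}\,dt$ by $Ce^{-\lambda x}x^{\sigma-1}$, so the exponentials cancel and the hypothesis $\int_0^\infty|f(e^{i\theta}x)|x^{\sigma-1}\,dx<\infty$ finishes the argument. The only difference is in the justification of that bound: the paper cites an asymptotic expansion of the incomplete Gamma function, whereas you derive it by an elementary estimate and treat the region near $x=0$ separately, which in fact tidies up the paper's global bound (which is only valid as stated for large $x$ when $\sigma>1$).
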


\begin{proof}
Let us first note that $f(0) < C$ so that $\int_0^t e^x f(x) \,dx < \infty$. Perform integration by parts:

$$\int_0^\infty e^{- \lambda t}t^{\sigma - 1}\int_0^t e^{\lambda x} |f(e^{i\theta}x)|\,dx\,dt = \int_0^\infty e^{\lambda x} |f(e^{i\theta}x)| \int_x^\infty e^{- \lambda t}t^{\sigma - 1}\,dt\,dx$$

From looking at \cite{rf3} we have an asymptotic expansion of the incomplete Gamma function in the integral, $\int_x^\infty e^{-\lambda t}t^{\sigma - 1}\,dt \sim \Big{(}\frac{x}{\lambda}\Big{)}^{\sigma}e^{-\lambda x} \sum_{k=0}^\infty \frac{b_k}{(\lambda x-\sigma)^{k+1}}$ which implies we can find a $C \in \mathbb{R}^+$ such that $\int_x^\infty e^{-\lambda t}t^{\sigma - 1}\,dt < C e^{-\lambda x}x^{\sigma - 1}$. Where this follows because we can factor $1/x$ from the sum and the rest of the series is $\mathcal{O}(1)$ as $x \to \infty$. This shows,

$$\int_0^\infty e^{\lambda x} |f(e^{i\theta}x)| \int_x^\infty e^{- \lambda t}t^{\sigma - 1}\,dt\,dx < C \int_0^\infty|f(e^{i\theta}x)|x^{\sigma-1}\,dx$$

This gives the result.
\end{proof}

We can now give our theorem on the analytic continuation of the summation operator.  We may not have given enough motivation as to why the integral transforms we present are the ones we are looking for, but the recursive behaviour is satisfied and from a familiarity with fractional calculus it seems less questionable and more apparent. The expression may seem to have been plucked from the air since we have tried to separate the direct attachment to the differintegral, however someone who is familiar with the transforms will clearly see that we are conjugating (in terms of abstract algebra) the integral operator by the Twisted Mellin transform, where the inverse Twisted Mellin transform is calculated using the inverse Mellin transform.

\begin{theorem}\label{thm1}
Let $\phi(z)$ be holomorphic in the strip $b \in \mathbb{R}^+,\,0 < \Re(z) < b$, $b > 1$ and let $|\lim_{\Re(z) \to 0} \phi(z)| < \infty$. Let $|\phi(z)| < C e^{\alpha|\Im(z)|}$ for $\alpha \in \mathbb{R},\,0 \le \alpha < \pi/2$ and $C \in \mathbb{R}^+$. Then the indefinite sum of $\phi$, $\sum_z \phi$ is represented for $0 < \sigma < b$ and $0 < \Re(z) < b$ by,
\begin{equation}\label{Indfsum}
\sum_z \phi  = \frac{1}{\G(z)} \int_0^\infty e^{-t}t^{z-1} \Big{(}\int_0^t \frac{e^x}{2 \pi i} \Big{(} \int_{\sigma - i \infty}^{\sigma + i\infty} \G(\xi)\phi(\xi)x^{-\xi}\,d\xi \Big{)}\,dx \Big{)} \,dt
\end{equation}
or written more compactly,
\begin{equation}\label{Indfsum2}
\sum_z \phi  = \mathcal{Y}_z \int_0^x \mathcal{Y}_x^{-1} \phi
\end{equation}
\end{theorem}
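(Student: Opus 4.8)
The plan is to verify directly that the right-hand side of \eqref{Indfsum} is holomorphic in $z$ on the strip $0 < \Re(z) < b$ and satisfies the defining recursion of Definition \ref{df1}, namely $\sum_{z+1}\phi - \sum_z\phi = \phi(z+1)$. The compact form \eqref{Indfsum2} tells us what bookkeeping to do: set $f(x) = \mathcal{Y}_x^{-1}\phi = \frac{1}{2\pi i}\int_{\sigma-i\infty}^{\sigma+i\infty}\G(\xi)\phi(\xi)x^{-\xi}\,d\xi$, which by Lemma \ref{lma1} (first direction) is holomorphic in the sector $\alpha-\pi/2 < \arg(x) < \pi/2-\alpha$ with $\int_0^\infty |f(e^{i\theta}x)|x^{\sigma-1}\,dx < \infty$ for $0 < \sigma < b$ and admissible $\theta$. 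Then the inner double integral is $g(t) = \int_0^t e^x f(x)\,dx$ against the weight $e^{-t}t^{z-1}$, and the outermost operation is exactly the Weyl differintegral $\D{x}{-z}|_{x=0}$ applied to $e^x$ times (the reflection of) the antiderivative $g$; that is, $\sum_z\phi = \mathcal{Y}_z G$ where $G(x) = \int_0^x f$.

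First I would establish convergence and holomorphy. Lemma \ref{lma2}, applied with $\lambda = 1$, gives precisely $\int_0^\infty e^{-t}t^{\sigma-1}\big(\int_0^t e^x|f(e^{i\theta}x)|\,dx\big)\,dt < \infty$, so the iterated integral in \eqref{Indfsum} converges absolutely for $0 < \Re(z) < b$; holomorphy in $z$ then follows by Morera/Fubini since $t^{z-1}$ is entire in $z$ and the modulus bound is locally uniform in $z$ on the strip. Next I would use the Twisted Mellin identity \eqref{TMT}: since $G' = f$, we have $\mathcal{Y}_z f = \mathcal{Y}_z G' = \T_z \mathcal{Y}_z G$, i.e. $\mathcal{Y}_z G - \mathcal{Y}_{z-1} G = \mathcal{Y}_z f$. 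Shifting indices, $\mathcal{Y}_{z+1} G - \mathcal{Y}_z G = \mathcal{Y}_{z+1} f$. So the recursion for $\sum_z\phi = \mathcal{Y}_z G$ reduces to showing $\mathcal{Y}_{z+1} f = \phi(z+1)$, i.e. that $\mathcal{Y}$ and $\mathcal{Y}^{-1}$ are genuinely inverse: $\mathcal{Y}_z(\mathcal{Y}_x^{-1}\phi) = \phi(z)$ on $0 < \Re(z) < b$. This is where the Mellin inversion theorem quoted after Definition \ref{df4} enters, combined with Definition \ref{df5}: $\mathcal{Y}_z f = \D{x}{-z}|_{x=0} e^x f(-x)$, and unwinding $f(-x) = \frac{1}{2\pi i}\int \G(\xi)\phi(\xi)(-x)^{-\xi}\,d\xi$ the $e^x$ factor is cancelled by the $e^{-x}$ appearing in the Gamma integral representation \eqref{Gamma}, leaving $\int_0^\infty e^{-x}x^{z-1}\,dx/\G(z)$-type evaluation against the Mellin variable that collapses to $\phi(z)$ by Mellin inversion.

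The remaining point is the equivalence of \eqref{Indfsum} and \eqref{Indfsum2}, which is essentially a matter of unpacking notation: in \eqref{Indfsum2} the symbol $\int_0^x$ denotes the antiderivative $G$ vanishing at $0$, $\mathcal{Y}_x^{-1}\phi$ is the inverse Mellin transform $f$, and $\mathcal{Y}_z$ is read off from Definition \ref{df5} as the Weyl differintegral of $e^{x}(\,\cdot\,)(-x)$; substituting these three pieces into each other reproduces the triple integral in \eqref{Indfsum} verbatim once one tracks the sign flips $x \mapsto -x$ and the $e^{i\theta z}$ phase factors (which can be rotated away by the contour-shift argument already used in Lemma \ref{lma1}). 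I would spell this substitution out line by line.

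I expect the main obstacle to be the interchange-of-integrals justifications needed to make the inversion step $\mathcal{Y}_z\mathcal{Y}_x^{-1} = \mathrm{id}$ rigorous: one must bring the $\D{x}{-z}|_{x=0}$ integral inside the vertical Mellin contour, apply Fubini, and only then recognize the inner integral as a Beta/Gamma evaluation that telescopes against $1/\G(z)$. The absolute-convergence estimates of Lemmas \ref{lma1} and \ref{lma2} — in particular the bound $|f(e^{i\theta}x)| < Cx^{-\epsilon}$ near $0$ and $< Cx^{-b}$ near $\infty$ — are exactly what licenses these swaps, but assembling them into a clean Fubini argument across three nested integrals with a movable abscissa $\sigma$ is the delicate part. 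A secondary, milder obstacle is confirming uniqueness as claimed in the introduction: that adding a nonzero $1$-periodic function destroys the convergence of \eqref{Indfsum}; this I would argue by noting such a perturbation forces $\phi$ to grow like $|\G(\xi)|^{-1}$ times a non-decaying periodic factor along vertical lines, violating the exponential-type hypothesis that Lemma \ref{lma1} needs.
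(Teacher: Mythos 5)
Your proposal follows essentially the same route as the paper's own proof: convergence of the triple integral via Lemma \ref{lma1} and Lemma \ref{lma2} (with $\lambda=1$), then the recursion obtained by applying the Twisted Mellin identity \eqref{TMT} to the antiderivative so that the backwards difference becomes the transform of its derivative, which Mellin inversion collapses to $\phi$. The only slips are cosmetic: $\mathcal{Y}_x^{-1}\phi$ is $e^x$ times the inverse Mellin transform of $\G(\xi)\phi(\xi)$ rather than that transform itself, and the final inversion step is plain Mellin inversion (no Beta/Gamma evaluation is needed), while the paper additionally records the companion identity $\sum_z\T_z\phi=\phi+A$, which your argument can omit since the recursion already follows from $\T\sum=1$.
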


\begin{proof}
We first show convergence of the triple integral. This will not be too difficult as the bulk of the work was placed in the lemmas before. If $f(x) = \frac{1}{2 \pi i}\int_{\sigma - i \infty}^{\sigma + i\infty} \G(\xi)\phi(\xi)x^{-\xi}\,d\xi$ it is holomorphic in the sector $\alpha -\pi/2 < \arg(x) < \pi/2 - \alpha$ and satisfies the conditions of Lemma \ref{lma1}, so we know, $\int_0^\infty |f(e^{i\theta}x)|x^{\sigma - 1}\,dx < \infty$ for $\sigma \in \mathbb{R}^+$ in the strip $0 < \sigma < b$. Therefore $f$ satisfies the conditions of Lemma \ref{lma2}, which implies $\int_0^\infty e^{-t} t^{\sigma - 1} \int_0^t e^{x}|f(e^{i\theta}x)|\,dx\,dt < \infty$. Therefore the triple integral transform is convergent. Also, $\int_0^\infty e^{-\cos(\theta)t} t^{\sigma - 1} |\int_0^{e^{i\theta}t} e^{e^{i\theta}x}f(x)\,dx|\,dt < \infty$ because all of these transforms converge in sectors, so in the last step we are justified in applying the Twisted Mellin transform. 

Now we show that this is in fact an indefinite sum of $\phi$. Let us assume that $1< \Re(z)< b$ so that $\T_z \sum_z \phi$ exists and we get, by applying Equation \eqref{TMT},

\begin{eqnarray*}
\T_z \sum_z \phi  &=& \T_z \Big{[}\frac{1}{\G(z)} \int_0^\infty e^{-t}t^{z-1} \int_0^t e^x f(x)\,dx\,dt \Big{]}\\
&=& \frac{1}{\G(z)} \int_0^\infty e^{-t}t^{z-1} \frac{d}{dt} \Big{(}\int_0^t e^x f(x)\,dx \Big{)}\,dt\\
&=& \frac{1}{\G(z)} \int_0^\infty e^{-t}t^{z-1}e^t f(t)\,dt\\
&=& \frac{1}{\G(z)} \int_0^\infty  f(t)t^{z-1}\,dt\\
&=& \phi(z)\\
\end{eqnarray*}

Therefore we know that this operator satisfies $\T \sum = 1$. Let us now show it happens on the right as well. We first note that $\frac{1}{2 \pi i} \int_{\sigma - i\infty}^{\sigma + i \infty} \G(\xi) (\phi(\xi) - \phi(\xi-1))x^{-\xi}\,d\xi = f(x) + f'(x)$ and that $\int_0^t e^x (f(x) + f'(x))\,dx = e^t f(t) - f(0)$,

\begin{eqnarray*}
\sum_z (\T_z\phi(z))  &=& \frac{1}{\G(z)} \int_0^\infty e^{-t}t^{z-1} \int_0^t e^x (f(x)+f'(x))\,dx\,dt\\
&=& \frac{1}{\G(z)} \int_0^\infty (f(t)-f(0)e^{-t})t^{z-1}\,dt\\
&=&\phi(z) - f(0)\\
&=& \phi(z) + A\
\end{eqnarray*}

Therefore we have shown $\T \sum \phi = \phi$ and $\sum \T \phi = \phi + A$ (where $A \in \mathbb{C}$, and this constant is much like a constant of integration, as in $\int_c^t g'(t) \,dt = g(t) + A$) so that it is natural to say $\T^{-1} = \sum$, which is a defining property of $\sum$ and assures that $\sum_z \phi + \phi(z+1) = \sum_{z+1} \phi$. This gives the result.
\end{proof}

More advantageously we can show that $\sum_z$ is exponentially bounded as its imaginary argument grows so that we may consider $\sum_z \sum_z \phi$. Going on in such a manner we can produce all the natural iterates of the indefinite summation operator.

\begin{lemma}\label{lma3}
Let $\phi(z)$ be holomorphic in the strip $b \in \mathbb{R}^+,\,0 < \Re(z) < b$ for $b > 1$ and let $|\lim_{\Re(z) \to 0} \phi(z)| < \infty$. Assume $|\phi(z)|< C e^{\alpha |\Im(z)|}$ for $\alpha \in \mathbb{R},\,0 \le \alpha < \pi/2$ and $C \in \mathbb{R}^+$. Then, $|\sum_z \phi| < M e^{\alpha |\Im(z)|}$ for $M \in \mathbb{R}^+$.
\end{lemma}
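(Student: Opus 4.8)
The plan is to show that the indefinite sum, as given by the triple integral representation in Theorem~\ref{thm1}, produces a holomorphic function satisfying the same type of exponential bound in $|\Im(z)|$ as $\phi$ itself. The natural strategy is to recognize $\sum_z \phi = \mathcal{Y}_z F$ where $F(x) = \int_0^x \mathcal{Y}_x^{-1}\phi$, and then to appeal to the converse direction of Lemma~\ref{lma1}: if I can show that the function inside the outermost Mellin-type transform has an absolutely convergent weighted $L^1$ norm on a ray (uniformly for $\sigma$ in the open strip), then Lemma~\ref{lma1} immediately hands back an exponential bound $|\sum_z \phi| < M e^{\alpha'|\Im(z)|}$ for the appropriate $\alpha'$.

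First I would unwind the definitions. Set $f(x) = \frac{1}{2\pi i}\int_{\sigma-i\infty}^{\sigma+i\infty}\Gamma(\xi)\phi(\xi)x^{-\xi}\,d\xi$, which by Lemma~\ref{lma1} is holomorphic in the sector $\alpha - \pi/2 < \arg(x) < \pi/2 - \alpha$ and satisfies $\int_0^\infty |f(e^{i\theta}x)|x^{\sigma-1}\,dx < \infty$ for $0 < \sigma < b$ and $\theta$ in that sector. Then let $g(x) = e^{-x}\int_0^x e^w f(w)\,dw$, so that $\sum_z \phi = \frac{1}{\Gamma(z)}\int_0^\infty g(t)t^{z-1}\,dt$; indeed $g$ is, up to the $e^{-x}$ weight, exactly the quantity whose weighted integral Lemma~\ref{lma2} controls, and $\mathcal{Y}_z F = \D{x}{-z}|_{x=0} e^x F(-x)$ matches this once one tracks the sign conventions. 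Now I must verify two things: that $g$ is holomorphic in a sector of the form $\alpha - \pi/2 < \arg(x) < \pi/2 - \alpha$ (holomorphy of $f$ plus the fact that the path integral $\int_0^x e^w f(w)\,dw$ is path-independent in the star-shaped sector gives this), and that $\int_0^\infty |g(e^{i\theta}x)|x^{\sigma-1}\,dx < \infty$ for every $\sigma$ in the open strip $0 < \sigma < b$. The second is precisely the content of Lemma~\ref{lma2} with $\lambda = \cos\theta > 0$ (rotating the contour of integration in the inner integral to the ray $\arg w = \theta$), modulo the estimate $|e^{-e^{i\theta}x}| = e^{-\cos(\theta)\,x}$ for $x > 0$. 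Having both, the converse half of Lemma~\ref{lma1} applied to $g$ yields that $\phi^\ast(z) := \frac{1}{\Gamma(z)}\int_0^\infty g(x)x^{z-1}\,dx = \sum_z\phi$ is holomorphic on $0 < \Re(z) < b$ with $|\sum_z\phi| < Me^{\alpha|\Im(z)|}$.

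The main obstacle I anticipate is bookkeeping around the sector of holomorphy and the exponent $\alpha$ that comes out. Lemma~\ref{lma1}'s converse produces a bound governed by the half-angle of the sector on which $g$ is holomorphic and $L^1$; I need that sector to be at least as wide as $\alpha - \pi/2 < \arg(x) < \pi/2 - \alpha$ so that the output exponent is $\le \alpha$, not something larger. Since $f$ lives on exactly that sector and the operations (integrating along a ray, multiplying by $e^{\pm x}$) do not shrink it, this should go through, but the delicate point is that Lemma~\ref{lma2} is stated for $\theta$ strictly inside, so one must let $\theta \to \pi/2 - \alpha$ and check the weighted-$L^1$ bound does not blow up — the incomplete-Gamma asymptotic used in Lemma~\ref{lma2} gives a constant depending on $\lambda = \cos\theta$, which stays bounded as $\theta \to \pi/2 - \alpha < \pi/2$, so $\cos\theta$ stays bounded away from $0$ and all is well. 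A secondary technical point is confirming the finiteness hypothesis $|\lim_{\Re(z)\to 0}\sum_z\phi| < \infty$ needed to invoke Lemma~\ref{lma1} in the first (forward) direction is not actually required here, since I am only using the converse direction, whose hypotheses are just holomorphy on the sector and the weighted integrability — so the argument is self-contained given Lemmas~\ref{lma1} and~\ref{lma2}.
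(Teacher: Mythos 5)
Your proof is correct and follows essentially the same route as the paper: the paper likewise rotates onto a ray $\arg x=\theta$ strictly inside the sector, invokes Lemma \ref{lma2} with $\lambda=\cos\theta$ for absolute convergence of the weighted integral, and then extracts the bound $C_\sigma e^{(\pi/2-\theta)|\Im(z)|}$ from the Gamma asymptotics, taking $|\theta|$ to $\pi/2-\alpha$. The only difference is packaging: you apply the converse half of Lemma \ref{lma1} as a black box to $g(x)=e^{-x}\int_0^x e^{w}f(w)\,dw$, whereas the paper inlines that same estimate by writing $\sum_z\phi=\frac{e^{i\theta z}}{\Gamma(z)}\int_0^\infty e^{-e^{i\theta}t}t^{z-1}\int_0^t e^{e^{i\theta}x}f(e^{i\theta}x)\,dx\,dt$ and bounding it directly.
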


\begin{proof}
Observe the representation of the indefinite sum in \eqref{Indfsum} and observe that the final integral transform applied is the Mellin transform. Observe further, if $f(x) = \frac{1}{2 \pi i} \int_{\sigma - i\infty}^{\sigma + i \infty} \G(\xi)\phi(\xi)x^{-\xi}\,d\xi$ we know $f$ is holomorphic in the sector $\alpha - \pi/2 < \arg(x) < \pi/2 - \alpha$. If $\alpha - \pi/2 < \theta < \pi/2 - \alpha$ then, $\int_0^\infty t^{z-1}e^{-t}\int_0^t e^x f(x)\,dx\,dt = e^{i\theta z}\int_0^\infty t^{z-1}e^{-e^{i\theta}t}\int_0^t e^{e^{i\theta}x} f(e^{i \theta}x)\,dx\,dt$. We have done a substitution through both integrals which is justified since all these transforms converge in sectors. We know that $\theta$ is arbitrary and the final function does not depend on $\theta$ by considering contour integrals. We leave the details to be filled by the reader.  These integrals are absolutely convergent, again by Lemma \ref{lma1} and Lemma \ref{lma2}. By Lemma \ref{lma2} we also see that 

\begin{eqnarray*}
|\int_0^\infty t^{z-1}e^{-e^{i\theta}t}\int_{0}^t e^{e^{i\theta}x} f(e^{i \theta}x)\,dx\,dt| &\le& \int_0^\infty t^{\sigma-1}e^{-t\cos(\theta)}\int_{0}^t e^{x\cos(\theta)} |f(e^{i\theta}x)|\,dx\,dt\\ 
&<& K \int_0^\infty |f(e^{i\theta}x)|x^{\sigma - 1}\,dx = C_\sigma
\end{eqnarray*}

 for  some $K\in \mathbb{R}^+$. This shows that, $$\sum_z \phi =\frac{e^{i \theta z}}{\G(z)} \int_0^\infty e^{-e^{i\theta}t} t^{z-1} \int_0^t e^{e^{i\theta}x} f(e^{i \theta}x)\,dx \,dt$$ so we have $|\sum_z \phi | < C_\sigma e^{(\pi/2 - \theta)|\Im(z)|}$ which gives us $|\sum_z \phi| < M e^{\alpha |\Im(z)|}$ for an $M \in \mathbb{R}^+$. This shows the result.
\end{proof}

With this we can state our corollary on the fact we can take any natural iterate of the indefinite sum and it will remain holomorphic in the same strip.

\begin{corollary}\label{cor1}
If $\phi(z)$ is holomorphic in the strip $b \in \mathbb{R}^+,\,0 < \Re(z) < b$ for $b>1$ and $|\phi(z)| < C e^{\alpha |\Im(z)|}$ for $\alpha \in \mathbb{R},\,0 \le \alpha < \pi/2$ and $C \in \mathbb{R}^+$ and $|\lim_{\Re(z) \to 0} \phi(z)| < \infty$ then $\phi$ can be indefinitely summed an arbitrary amount of times
\begin{equation}\label{rptsum}
\sum_z ... (n \, times)...\sum_z \phi = \mathcal{Y}_z \int_0^x ...(n \, times)...\int_0^x \mathcal{Y}^{-1}_x \phi
\end{equation}
\end{corollary}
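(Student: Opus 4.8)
The natural approach is induction on $n$, with the single step supplied by Theorem \ref{thm1} and the quantitative control supplied by Lemma \ref{lma3}. For $n=1$ the assertion is precisely Theorem \ref{thm1} together with the compact form \eqref{Indfsum2}. For the inductive step I would carry along the following package: the $n$-fold iterate $\psi := \sum_z\cdots\sum_z\phi$ is well defined, holomorphic on the \emph{same} strip $0<\Re(z)<b$ with $b>1$, bounded as $\Re(z)\to 0$, satisfies $|\psi(z)|<M e^{\alpha|\Im(z)|}$ with the \emph{same} exponent $\alpha\in[0,\pi/2)$, and equals the right-hand side of \eqref{rptsum}. Granting this for $n$, I first note that $\psi$ meets every hypothesis of Theorem \ref{thm1}: holomorphy on $0<\Re(z)<b$ with $b>1$ and the exponential bound with exponent $\alpha$ are built into the package, and $|\lim_{\Re(z)\to 0}\psi(z)|<\infty$ follows either from the fact that the bound of Lemma \ref{lma3} is valid throughout the open strip, or, reading off \eqref{Indfsum}, from the observation that the inner double integral is a holomorphic $F$ with $F(0)=0$ and $F(t)=\mathcal{O}(t)$ near the origin, so that $\int_0^\infty e^{-t}t^{z-1}F(t)\,dt$ converges and is holomorphic for $-1<\Re(z)<b$ and hence $\sum_z\psi$, being this integral divided by the entire function $1/\G(z)$, extends holomorphically across $\Re(z)=0$.

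Applying Theorem \ref{thm1} to $\psi$ in place of $\phi$ then produces $\sum_z\psi=\mathcal{Y}_z\int_0^x\mathcal{Y}_x^{-1}\psi$, holomorphic on $0<\Re(z)<b$, which satisfies $\T_z\sum_z\psi=\psi(z)$ (the first half of the proof of Theorem \ref{thm1}) and is therefore an indefinite sum of $\psi$ in the sense of Definition \ref{df1}, i.e.\ the $(n+1)$-fold indefinite sum of $\phi$. One more application of Lemma \ref{lma3}, now to $\psi$, reinstates $|\sum_z\psi|<M'e^{\alpha|\Im(z)|}$ with the original $\alpha$ and, with it, boundedness near $\Re(z)=0$. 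Thus every item of the package survives with $n$ replaced by $n+1$, except that we still owe the identification of $\sum_z\psi$ with the right-hand side of \eqref{rptsum} for $n+1$. This is where the interleaved Twisted Mellin transforms must be telescoped: by Lemma \ref{lma1}, $\mathcal{Y}_x^{-1}$ is a genuine two-sided inverse of $\mathcal{Y}_z$ on the pair of holomorphic-function spaces it identifies, and the convergence estimates of Lemmas \ref{lma1}--\ref{lma2} place each intermediate iterate, together with its primitive $\int_0^x(\cdot)$, back into the source space of that isomorphism. Consequently, inside $\mathcal{Y}_z\int_0^x\mathcal{Y}_x^{-1}\big(\mathcal{Y}_z\int_0^x\cdots\mathcal{Y}_x^{-1}\phi\big)$ every middle composition $\mathcal{Y}_x^{-1}\mathcal{Y}_z$ collapses to the identity, leaving $\mathcal{Y}_z$ applied to $n+1$ nested copies of $\int_0^x$ acting on $\mathcal{Y}_x^{-1}\phi$; rewriting $\mathcal{Y}$ in the triple-integral notation of \eqref{Indfsum} and exchanging orders of integration (Fubini, justified exactly as in the proof of Theorem \ref{thm1}, since every transform converges absolutely in a sector) gives \eqref{rptsum}.

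The part I expect to demand the most care is not any single estimate but precisely this bookkeeping: certifying that one full pass of the operator returns a function lying in \emph{exactly} the same admissible class — same strip width $b$, same exponential exponent $\alpha$, same mild behaviour at $\Re(z)=0$ — so that Theorem \ref{thm1} is literally reusable, and that the interleaved $\mathcal{Y}$'s cancel, which hinges on Lemma \ref{lma1} being a true isomorphism rather than a mere pair of one-sided maps. The quantitative heart, namely that the exponent $\alpha$ is reproduced rather than degraded, has already been isolated as Lemma \ref{lma3}, so the remaining work is organizational.
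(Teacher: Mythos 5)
Your proposal is correct and follows essentially the same route as the paper: induction on $n$, using Theorem \ref{thm1} for each summation step, Lemma \ref{lma3} to verify that the iterate re-enters the hypothesis class with the same strip and exponent $\alpha$, and Lemmas \ref{lma1}--\ref{lma2} to justify collapsing the interleaved transforms into the representation \eqref{rptsum}. Your version simply spells out the bookkeeping (the boundary behaviour at $\Re(z)=0$ and the cancellation $\mathcal{Y}_x^{-1}\mathcal{Y}_z$) that the paper's proof leaves implicit.
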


\begin{proof}
Observe Theorem \ref{thm1} and Lemma \ref{lma3}. Theorem \ref{thm1} ensures we can indefinitely sum $\phi$ and then Lemma \ref{lma3} ensures that that indefinite sum satisfies the original conditions of Theorem \ref{thm1}. The representation \eqref{rptsum} follows by induction and by applying Lemma \ref{lma2} repeatedly.
\end{proof}

Now before ending this section and conversing solely of functions of exponential type, we see that functions like $e^{z^2}$ as well as $e^{z^3}$ are indefinitely summable. We can find more complicated functions of order greater than one that are indefinitely summable if they behave nicely as the imaginary argument grows.

\section{The exponential space $\E$ and the indefinite sum}\label{sec3}

\setcounter{section}{3}
\setcounter{equation}{0}\setcounter{theorem}{0}

As we've defined the indefinite sum at this point, we've only used functions bounded by an exponential as the imaginary part grows. We now consider bounding the function by an exponential as the real part of $z$ grows as well. For this reason, we define an exponential space in which the indefinite sum operates. The exponential bounds will ensure we have a unique operator on the space and will show our second representation as well as our first representation are equivalent where they intersect in definition.

\begin{definition}\label{df7}
Suppose $f(z)$ is a holomorphic function on the right half plane $z\in \mathbb{C},\,\Re(z) > 0$ and let $|\lim_{\Re(z) \to 0} f(z)| < \infty$. Assume there exists $\alpha,\rho ,C\in \mathbb{R}^+,\,0\le\alpha < \pi/2,$ such that $|f(z)| < C e^{\alpha |\Im(z)|+\rho|\Re(z)|}$. The space $\E$ contains all such functions $f$ that satisfy these conditions.
\end{definition}

$\E$ contains polynomials, exponentials with a base that have a real part greater than zero, rationals with poles in the left half plane, and other special functions. Aside, on notation, we will write $\alpha_f$ for the bound of $f$ as the imaginary part grows and we will write $\rho_f$ for the bound of $f$ as the real part grows (when distinction between functions is required). For our purposes we will be using the indefinite sum we defined in Theorem \ref{thm2}. Let $f$ belong to $\E$ and let $\sigma\in\mathbb{R}^+,\,\sigma > 0$,

\begin{equation}
\sum_z f = \frac{1}{\G(z)} \int_0^\infty t^{z-1} e^{-t} \int_0^{t} \frac{e^x}{2 \pi i} \int_{\sigma-i\infty}^{\sigma + i \infty} \G(\xi)f(\xi)x^{-\xi}\,d\xi\,dx\,dt
\end{equation} 

We know that if $|f(z)| < C_{\Re(z)} e^{\alpha |\Im(z)|}$ that this representation converges and that $|\sum_z f| < M_{\Re(z)} e^{\alpha |\Im(z)|}$. Therefore $\sum$ is defined for any element of $\E$. We also have the convenient formula, $$\sum_{n} f = \sum_{j=1}^n f(j)$$ which follows by the defining recursion of $\sum$ and because $\sum_0 f = 0$ since $\lim_{z \to 0} \D{x}{-z}\Big{|}_{x=0} e^x \int_0^x e^t g(t) \,dt = e^x \int_0^x e^t g(t) \,dt \Big{|}_{x=0}=0$. This limit is allowed to be taken because $|e^{-x} \int_0^x e^t g(t) \, dt |< C_{\epsilon}x^{1-\epsilon}$ for any $\epsilon > 0$ as $x\to 0$ so that the integral expression for the differintegral converges at $z=0$. We show now that $\sum$ sends $\E \to \E$.

\begin{theorem}\label{thm2}
Suppose $f \in \E$, then $\sum f \in \E$.
\end{theorem}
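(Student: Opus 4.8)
The plan is to obtain the two facts still needed to place $\sum f$ in $\E$ — holomorphy on $\Re(z)>0$ with a finite limit as $\Re(z)\to 0$, and a joint bound $|\sum_z f| < C e^{\alpha|\Im(z)|+\rho|\Re(z)|}$ — by leveraging the imaginary-direction estimate already available (the discussion preceding this theorem, together with Lemma \ref{lma3}) on a single fixed bounded vertical strip, and then using the functional equation $\sum_{z+1}f=\sum_z f+f(z+1)$ to carry that control across the whole half plane. The point of the theorem is precisely to upgrade the $\Re(z)$-dependent constant $M_{\Re(z)}$ to one growing at most like $e^{\rho|\Re(z)|}$.

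First I would fix $b=3$ and restrict $f$ to $0<\Re(z)<b$, where $|f(z)|<(Ce^{b\rho})e^{\alpha|\Im(z)|}$; thus $f$ meets the hypotheses of Theorem \ref{thm1} and Lemma \ref{lma3} with an imaginary-direction constant that does not depend on $\Re(z)$ across that strip, so $\sum_z f$ is holomorphic there and satisfies $|\sum_z f|\le M e^{\alpha|\Im(z)|}$, with $M$ depending only on $f$ (and on $b$), uniformly for $\Re(z)$ in any compact subinterval of $(0,b)$. Since $b$ was arbitrary, $\sum_z f$ is holomorphic on all of $\Re(z)>0$. In particular there is a constant $M$ with $|\sum_z f|\le Me^{\alpha|\Im(z)|}$ whenever $1\le\Re(z)\le 2$.

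Next I would propagate this across the half plane with the iterated recursion
\[
\sum_{w+n}f=\sum_w f+\sum_{k=1}^{n}f(w+k).
\]
For $\Re(z)>2$, set $n=\lfloor\Re(z)\rfloor-1$, so that $w:=z-n$ has $\Re(w)\in[1,2)$ and $|\sum_w f|\le Me^{\alpha|\Im(z)|}$, while each remaining term obeys $|f(w+k)|\le Ce^{\alpha|\Im(z)|}e^{\rho\Re(w+k)}$ with $\Re(w+k)=\Re(z)-(n-k)$; reindexing by $j=n-k$ gives $\sum_{k=1}^{n}|f(w+k)|\le \frac{C}{1-e^{-\rho}}e^{\alpha|\Im(z)|}e^{\rho\Re(z)}$, where convergence of $\sum_{j\ge 0}e^{-\rho j}$ uses $\rho>0$. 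For $0<\Re(z)<1$ I would instead run the recursion the other way, $\sum_z f=\sum_{z+1}f-f(z+1)$ with $\Re(z+1)\in(1,2)$, which again gives a bound of the form $C'e^{\alpha|\Im(z)|}$. Assembling the three ranges $\Re(z)\in(0,1)$, $[1,2]$, $(2,\infty)$ yields $|\sum_z f|<C''e^{\alpha|\Im(z)|+\rho|\Re(z)|}$, so $\sum f$ satisfies the $\E$-bound with the very same constants $\alpha_f$ and $\rho_f$. Finiteness of $\lim_{\Re(z)\to 0}\sum_z f$ follows from the same down-recursion (since $\sum_{z+1}f$ and $f(z+1)$ are holomorphic, hence continuous, across $\Re=1$), or directly from the observation $\sum_0 f=0$ already recorded in the text; together with the previous paragraph this shows $\sum f\in\E$.

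The genuinely analytic content is entirely contained in Lemma \ref{lma3}; the only delicate point, and the main obstacle, is that the constant there is not uniform as the real part of the argument tends to $0$ or to $\infty$ — it arises from an integral of the inner Mellin transform whose behaviour degrades at both ends of the strip — so one cannot simply push the triple-integral estimate through for large $\Re(z)$. The resolution is to reduce to a fixed compact strip and let the functional equation absorb the rest, and the one auxiliary check is that this costs only a factor $O(e^{\rho\Re(z)})$, which is exactly the convergence of the geometric series $\sum_{m\ge 0}e^{-\rho m}$.
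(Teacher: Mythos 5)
Your proposal is correct and follows essentially the same route as the paper: both use the defining recursion to write $\sum_z f$ as the indefinite sum at a point of bounded real part (controlled by Lemma \ref{lma3}) plus a finite sum of values of $f$, then bound the latter. The only cosmetic difference is that you sum the tail as a geometric series (keeping the same $\rho$), whereas the paper bounds it by $(\lfloor\Re(z)\rfloor+1)$ times the largest term and absorbs that factor into $e^{\epsilon|\Re(z)|}$; you also spell out the holomorphy and boundary-limit details that the paper leaves implicit.
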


\begin{proof}
 Let us say $|f| < C e^{\alpha|\Im(z)| + \rho|\Re(z)|}$. We know that $\sum_z f$ is bounded by an exponential and of type less than $\pi/2$ on the imaginary line. Now observe that,

$$\sum_z f = \sum_{z- \lfloor \Re(z) \rfloor} f + \sum_{j=1}^{\lfloor \Re(z) \rfloor} f(z-\lfloor \Re(z) \rfloor + j)$$

which comes from the defining property of $\sum$ and shows that,

$$|\sum_z f| < \sum_{j=0}^{\lfloor \Re(z) \rfloor} Me^{\alpha |\Im(z)| + \rho|\Re(z)-\lfloor \Re(z) \rfloor + j|} < M|\lfloor\Re(z)\rfloor + 1|e^{\alpha|\Im(z)| + \rho|\Re(z)|}$$

And therefore since $|\lfloor\Re(z)\rfloor + 1| < C e^{\epsilon |\Re(z)|}$ for some $\epsilon > 0$ this shows the result.

\end{proof}

With this we are now able to talk freely of the indefinite sum acting on a space. We can also give a more compact form for the operator which follows by Ramanujan's master theorem. 

\begin{corollary}\label{cor2}
Let $f \in \E$ and define the entire function $\vartheta(x) = \sum_{n=0}^\infty \Big{(}\sum_{j=1}^{n+1} f(j) \Big{)}\frac{x^n}{n!}$. Then for $z \in \mathbb{C},\,\Re(z) > 0$, $$\sum_{z} f = \frac{1}{\G(1-z)}\Big{(}\sum_{n=0}^\infty \big{(} \sum_{j=1}^{n+1} f(j) \big{)} \frac{(-1)^n}{n!(n+1-z)} + \int_1^\infty \vartheta(-x)x^{-z}\,dx\Big{)}$$
\end{corollary}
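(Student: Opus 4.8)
The plan is to recognise the displayed identity as an instance of Ramanujan's master theorem (in the rigorous form due to Hardy), applied not to $f$ but to the holomorphic translate of the partial-sum function. First I would record the elementary facts. Since $f\in\E$, for natural $j$ we have $|f(j)|<Ce^{\rho j}$, so the coefficients $a_n:=\sum_{j=1}^{n+1}f(j)$ satisfy $|a_n|\le C'e^{\rho(n+1)}$; hence $\vartheta(x)=\sum_{n\ge0}a_n x^n/n!$ converges for all $x$, i.e. $\vartheta$ is entire (of exponential type $\le e^\rho$). Now set $\phi(w):=(\sum f)(w+1)=\sum_{w+1}f$. By Theorem \ref{thm2} we have $\sum f\in\E$, so $\phi$ is holomorphic on the half-plane $\Re(w)>-1$ and obeys $|\phi(w)|\le Me^{\alpha|\Im(w)|+\rho|\Re(w)+1|}$ with $\alpha<\pi/2$; moreover, using $\sum_m f=\sum_{j=1}^{m}f(j)$, one has $\phi(n)=\sum_{n+1}f=a_n$ for every integer $n\ge0$, so that $\vartheta(x)=\sum_{n\ge0}\phi(n)x^n/n!$.

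With this setup the hypotheses of Ramanujan's master theorem are met: $\phi$ extends holomorphically a fixed distance to the left of the imaginary axis and its growth exponent $\alpha$ in the imaginary direction is strictly below $\pi$. Therefore, for $0<\Re(s)<1$,
\[
\int_0^\infty x^{s-1}\vartheta(-x)\,dx=\G(s)\,\phi(-s)=\G(s)\sum_{1-s}f .
\]
Putting $s=1-z$ gives $\int_0^\infty x^{-z}\vartheta(-x)\,dx=\G(1-z)\sum_z f$, i.e. $\sum_z f=\frac{1}{\G(1-z)}\int_0^\infty\vartheta(-x)x^{-z}\,dx$, for $0<\Re(z)<1$. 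Splitting the integral at $x=1$ and using that the power series for $\vartheta$ converges uniformly on $[0,1]$, term-by-term integration yields $\int_0^1\vartheta(-x)x^{-z}\,dx=\sum_{n\ge0}\frac{a_n(-1)^n}{n!(n+1-z)}$ for $\Re(z)<1$, which is exactly the claimed formula on the strip $0<\Re(z)<1$.

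It then remains to extend the identity from the strip to all of $\Re(z)>0$ by analytic continuation. The left side $\sum_z f$ is holomorphic on $\Re(z)>0$. On the right side, the series $\sum_n\frac{a_n(-1)^n}{n!(n+1-z)}$ converges locally uniformly off $\{1,2,3,\dots\}$ and has only simple poles there, which are cancelled by the zeros of $1/\G(1-z)$, while $\int_1^\infty\vartheta(-x)x^{-z}\,dx$ is holomorphic on $\Re(z)>0$ as soon as $\vartheta(-x)$ decays as $x\to+\infty$. That decay I would get from the Mellin--Barnes representation $\vartheta(-x)=\frac{1}{2\pi i}\int_{(c)}\G(s)\phi(-s)x^{-s}\,ds$ with $0<c<1$: the integrand is absolutely integrable precisely because $\G$ decays like $e^{-\pi|\Im(s)|/2}$ on vertical lines while $\phi(-s)=\sum_{1-s}f$ grows only like $e^{\alpha|\Im(s)|}$ with $\alpha<\pi/2$ (this is the same mechanism as in Lemma \ref{lma1}), and pushing $c$ up toward $1$ gives $\vartheta(-x)=O(x^{-1+\epsilon})$, which suffices. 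Since both sides are holomorphic on $\Re(z)>0$ and agree on $0<\Re(z)<1$, the identity theorem finishes the proof.

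I expect the middle step to be the main obstacle: obtaining a cleanly justified application of Ramanujan's master theorem in this class — equivalently, establishing the Mellin--Barnes representation of $\vartheta(-x)$ and its decay — since the naive interchange of sum and integral in $\int_0^\infty x^{s-1}\sum_n\phi(n)(-x)^n/n!\,dx$ is not absolutely convergent, and it is exactly the inherited bound $\alpha<\pi/2$ that rescues it.
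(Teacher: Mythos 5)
Your proposal is correct and is essentially the paper's own route: the paper offers no written proof beyond the remark that the corollary ``follows by Ramanujan's master theorem,'' and your argument is exactly that application (to $\phi(w)=\sum_{w+1}f$, whose values at the naturals are the partial sums), with the split at $x=1$ and the identity-theorem continuation supplying the details the paper leaves implicit. Your use of the Mellin--Barnes representation and the bound $\alpha<\pi/2$ against the decay of $\G$ to justify the tail integral is the same mechanism the paper itself relies on in Lemmas \ref{lma1} and \ref{lma4}.
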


With these results we are prepared to analyze the indefinite sum in this space more intricately.

\section{On Indefinite Sum Convolution}\label{sec4}

\setcounter{section}{4}
\setcounter{equation}{0}\setcounter{theorem}{0}

Beginning this section we will prove a lemma that appears elsewhere but we show it using techniques from fractional calculus. This result will be applied repeatedly on various transforms involving the indefinite sum which will give us a powerful way of proving properties of the indefinite sum over the complex plane by looking at its values on the positive integers.

\begin{lemma}\label{lma4}
Let $f \in \E$ and for $n \in \mathbb{N}$ assume $f(n) = 0$, then $f= 0$.
\end{lemma}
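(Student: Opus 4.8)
The plan is to exploit the exponential bound defining $\E$ together with the Carlson-type uniqueness theorem for functions of exponential type. Recall that $f \in \E$ means $f$ is holomorphic on the right half plane $\Re(z) > 0$, has a finite limit as $\Re(z) \to 0$, and satisfies $|f(z)| < C e^{\alpha|\Im(z)| + \rho|\Re(z)|}$ with $0 \le \alpha < \pi/2$. The key observation is that $\alpha < \pi/2$, so the growth of $f$ along vertical lines is strictly slower than $e^{(\pi/2)|\Im(z)|}$; this is precisely the hypothesis needed for Carlson's theorem, which asserts that a function holomorphic and of exponential type in a half plane, bounded on the imaginary axis by $e^{\alpha|\Im(z)|}$ with $\alpha < \pi$ (here in fact $\alpha < \pi/2$, comfortably inside), that vanishes at all non-negative integers must vanish identically.

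First I would reduce to a setting where Carlson's theorem applies cleanly. Since $f$ extends continuously to the closed half plane $\Re(z) \ge 0$ (by the finiteness of the limit as $\Re(z)\to 0$) and is holomorphic on the open half plane, and since $|f(z)| < C e^{\alpha|\Im(z)|+\rho|\Re(z)|}$ gives exponential type along every ray into the right half plane, $f$ is of exponential type there. Next I would invoke the indicator/Phragmén–Lindelöf machinery: the bound $\alpha < \pi/2$ on the imaginary axis, combined with the order-one growth $\rho$ in the real direction, confines the indicator function of $f$ so that the hypotheses of Carlson's theorem (in the form valid for a half plane, e.g. as in Boas, \emph{Entire Functions}) are met. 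Then, because $f(n) = 0$ for every $n \in \mathbb{N}$, Carlson's theorem forces $f \equiv 0$ on the right half plane.

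The main obstacle I anticipate is handling the interaction between the two growth directions: Carlson's theorem is usually stated for entire functions of exponential type or for functions on a half plane with a single controlling bound, whereas here we have anisotropic growth — type $\rho$ in $\Re(z)$ and type $\alpha < \pi/2$ in $\Im(z)$. The resolution is that $\alpha < \pi/2$ is strictly less than the critical value, so there is room to absorb the real-direction growth: one applies Phragmén–Lindelöf on the first and fourth quadrants (i.e.\ $\Re(z)>0$ split by the real axis) using a comparison function like $e^{-\epsilon z}$ or $e^{i\beta z}$ for suitable small $\beta$ to tilt the indicator and make the imaginary-axis bound effective, then concludes vanishing via the zeros at the integers. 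An alternative, more self-contained route, consistent with the paper's stated philosophy of using fractional-calculus transforms, would be to note that $f$ satisfies the hypotheses needed to form its (twisted) Mellin transform as in Definition \ref{df5} and Lemma \ref{lma1}; the vanishing at the integers, via Ramanujan's master theorem in the spirit of Corollary \ref{cor2}, would force the associated transform to be identically zero, and then the inversion (Mellin's theorem) would give $f \equiv 0$. Either way, the crux is converting "vanishes on $\mathbb{N}$" plus "sub-$\pi/2$ vertical growth" into "vanishes everywhere," and the sub-$\pi/2$ bound is exactly what makes this work.
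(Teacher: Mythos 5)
Your proposal is correct, but it takes a genuinely different route from the paper. The paper's proof stays inside its fractional-calculus toolkit: it forms the Gamma-weighted Mellin--Barnes integral $g(x)=\frac{1}{2\pi i}\int_{\sigma-i\infty}^{\sigma+i\infty}\G(\xi)f(1-\xi)x^{-\xi}\,d\xi$ (convergent precisely because $\alpha<\pi/2$ beats the $e^{-\pi|y|/2}$ Gamma asymptotics \eqref{asym}), shifts the contour over the poles of $\G$ to identify $g(x)=\sum_{n\ge 0}f(n+1)\frac{(\pm x)^n}{n!}=0$, and then recovers $f\equiv 0$ on a strip by Mellin inversion (Lemma \ref{lma1}) and hence everywhere by analytic continuation; this is exactly the ``alternative, more self-contained route'' you sketch in your last paragraph. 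Your primary route via Carlson's theorem is a clean and legitimate substitute: $f$ has finite exponential type in the half plane since $|f(z)|<Ce^{\alpha|\Im(z)|+\rho|\Re(z)|}\le Ce^{(\alpha+\rho)|z|}$, and its indicator along the imaginary axis is at most $\alpha<\pi/2<\pi$, which is all the standard half-plane statement (e.g.\ Boas) requires -- so the ``anisotropic growth'' difficulty you anticipate is not actually an obstacle, and no extra Phragm\'en--Lindel\"of tilting is needed. Two small cleanups: rather than leaning on the vague boundary condition $|\lim_{\Re(z)\to 0}f(z)|<\infty$ to extend $f$ continuously to the closed half plane, apply Carlson to $f(z+1)$, which is holomorphic on $\Re(z)>-1$, has the same type, and vanishes at $0,1,2,\dots$; conclude $f\equiv 0$ on $\Re(z)>1$ and finish with the identity theorem. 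What each approach buys: yours is shorter, pins down exactly which hypothesis drives uniqueness (in fact $\alpha<\pi$ would suffice for this lemma, so it is marginally more general), while the paper's argument avoids citing Carlson, reuses machinery already proved (Lemma \ref{lma1} and the contour-shift technique), and produces along the way the generating function $\sum_n f(n+1)x^n/n!$ that reappears in Corollary \ref{cor2}.
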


\begin{proof} Take $0 < \sigma <1$ and define $g(x) = \frac{1}{2\pi i } \int_{\sigma - i\infty}^{\sigma + i \infty} \G(\xi) f(1-\xi) x^{-\xi}\,d\xi$. By a simple exercise in contour integration we have $g(x) = \sum_{n=0}^\infty f(n+1) \frac{x^n}{n!} = 0$ this implies that $f=0$. 
\end{proof}

We see this result follows quickly, however its power is abundant. Firstly, it implies if $f_1 (n) = f_2 (n)$ for $n \in \mathbb{N}$ then $f_1(z) = f_2(z)$ for all $z \in \mathbb{C}_{\Re(z) \ge 0}$. We will not waste time in putting it to use. With this we define a new convolution that is associative, commutative, and spreads across addition. To be clear, we will make the following statement on notation, $\sum_z f = \sum_z f(s) \,\T_s$, where the additional terms declare the variable we are performing the sum across. This notation allows us to nest more variables which is required for what comes next.

\begin{definition}\label{df8}
Suppose $f(z-1), g(z-1) \in \E$ and $\alpha_f + 2\alpha_g < \pi/2$, then $$f\times g = \sum_{z+1} f(s-1)g(z+1-s)\,\T_s$$.
\end{definition}

We note this is definitively a convergent expression if we fix $z$ and take the indefinite sum from Theorem \ref{thm1}. The motivation for introducing this convolution is because $(f \times g) (n) = \sum_{j=0}^n f(j) g(n-j)$ which is quite a familiar expression. Furthermore we know that the space $\E$ is almost completely determined by how its functions behave on the naturals so this is bound to be valuable in some sense. We show some properties of this convolution acting in the space $\E$.

\begin{theorem}\label{theom3}
Suppose $f,g,h \in \E$ then:
\begin{enumerate}
\item If $\alpha_f+ 2\alpha_g < \pi/2$, then if $f \times g = h$ we have $h \in \E$.
\item If $\alpha_f+ 2\alpha_g < \pi/2$ and $2\alpha_f+ \alpha_g < \pi/2$, then  $f \times g = g \times f$.
\item If $\alpha_f+ 2(\alpha_g+ 2\alpha_h)< \pi/2$ and  then $f \times (g \times h) = (f \times g) \times h$.
\item If $\alpha_f+  2 \alpha_g < \pi/2$ and $\alpha_f+  2 \alpha_h < \pi/2$ then $f \times (g + h) = (f \times g) + (f \times h)$.
\item If $\alpha_f+ 2 \alpha_g < \pi/2$ then $f \times g = 0$ implies that $f=0$ or $g=0$.
\item Let $(z+1)_{s-1} = \frac{\G(z+s)}{\G(z+1)}$ for $s \in \mathbb{C}_{\Re(z) > 0}$ and $s_1, s_2 \in \mathbb{C}_{\Re(z) > 0}$ we have  $\frac{1}{\G(s_1)}(z+1)_{s_1-1} \times \frac{1}{\G(s_2)}(z+1)_{s_2-1} = \frac{1}{\G(s_1 + s_2)}(z+1)_{s_1+s_2-1}$.
\end{enumerate}
\end{theorem}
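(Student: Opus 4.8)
The unifying device is Lemma \ref{lma4}: an element of $\E$ is completely determined by its values on $\mathbb{N}$, and, as observed after Definition \ref{df8}, $(f\times g)(n)=\sum_{j=0}^{n}f(j)g(n-j)$ for $n\in\mathbb{N}$. Hence the plan is to first establish (1), using it together with closure of $\E$ under addition to place every convolution that occurs in the statement inside $\E$; then to deduce (2), (3), (4), (6) by comparing both sides at the naturals, where each reduces to an elementary identity for the discrete Cauchy product; and finally to handle (5) separately via the integral-domain property of power series.

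For (1), fix $z$ with $\Re(z)>1$ and set $F_z(s)=f(s-1)g(z+1-s)$, so that $(f\times g)(z)=\big(\sum_{s}F_z\big)(z+1)$. As a function of $s$, $F_z$ is holomorphic on a strip $1<\Re(s)<\Re(z)+1$ (using $f(z-1),g(z-1)\in\E$ for the edge behaviour), and from the bounds defining $\E$,
\[
|F_z(s)|<C^2\, e^{(\alpha_f+\alpha_g)|\Im(s)|}\, e^{\alpha_g|\Im(z)|+\rho_g|\Re(z)|+\rho_f}.
\]
Since $\alpha_f+\alpha_g<\alpha_f+2\alpha_g<\pi/2$, Theorem \ref{thm1} applies in the variable $s$ and Lemma \ref{lma3} gives $|\sum_{s}F_z|<M_z\, e^{(\alpha_f+\alpha_g)|\Im(s)|}$, where $M_z$ carries the parameter factor above. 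Evaluating at $s=z+1$ contributes a further $(\alpha_f+\alpha_g)|\Im(z)|$ from the exponential in $\Im(s)$ and $\alpha_g|\Im(z)|$ from $M_z$, for a total imaginary type $\alpha_f+2\alpha_g<\pi/2$, while the $\Re(z)$-dependence is at worst a polynomial times $e^{\rho_g|\Re(z)|}$. Invoking the recursion displayed in the proof of Theorem \ref{thm2} extends the bound from $\Re(z)>1$ to all $\Re(z)>0$, and holomorphy in $z$ is inherited from the triple-integral representation of the inner sum exactly as in Theorem \ref{thm1}; thus $h=f\times g\in\E$.

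With (1) in hand, (2) follows because both $f\times g$ and $g\times f$ lie in $\E$ and agree on $\mathbb{N}$ (commutativity of $\sum_{j=0}^{n}f(j)g(n-j)$), so Lemma \ref{lma4} forces equality on $\mathbb{C}_{\Re(z)\ge0}$. Parts (3) and (4) are the same in spirit: the nested condition $\alpha_f+2(\alpha_g+2\alpha_h)<\pi/2$ is precisely what (1) needs to build both $f\times(g\times h)$ and $(f\times g)\times h$ inside $\E$ (it dominates $\alpha_f+2\alpha_g+2\alpha_h$), and at the naturals both equal $\sum_{i+j+k=n}f(i)g(j)h(k)$; likewise for (4), $\sum_{j=0}^n f(j)(g+h)(n-j)$ splits termwise. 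For (6), note first that $p_s(z):=\tfrac{1}{\G(s)}(z+1)_{s-1}=\G(z+s)/\big(\G(s)\G(z+1)\big)$ is polynomially bounded by \eqref{asym}, hence lies in $\E$ with $\alpha_{p_s}=0$, so all three convolvees qualify and the type condition is vacuous; since $\tfrac{1}{\G(s)}(n+1)_{s-1}=\binom{n+s-1}{n}$, the claimed identity at $z=n$ is the Chu--Vandermonde identity $\sum_{j=0}^{n}\binom{s_1+j-1}{j}\binom{s_2+n-j-1}{n-j}=\binom{s_1+s_2+n-1}{n}$ (equivalently, the coefficient form of $(1-x)^{-s_1}(1-x)^{-s_2}=(1-x)^{-(s_1+s_2)}$), and Lemma \ref{lma4} upgrades it to all $z$.

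Finally, for (5): if $f\times g=0$ then $\sum_{j=0}^{n}f(j)g(n-j)=0$ for every $n\in\mathbb{N}$. Since $f,g\in\E$ satisfy $|f(n)|,|g(n)|\le Ce^{\rho n}$, the series $A(x)=\sum_{n\ge0}f(n)x^n$ and $B(x)=\sum_{n\ge0}g(n)x^n$ have positive radius of convergence and $AB$ has all coefficients zero; as $\mathbb{C}[[x]]$ is an integral domain this forces $A\equiv0$ or $B\equiv0$, i.e. $f$ or $g$ vanishes on $\mathbb{N}$, whence $f=0$ or $g=0$ by Lemma \ref{lma4}. The only genuinely delicate point is the bookkeeping in (1): separating the two routes by which $\Im(z)$ enters the final bound — through the evaluation point $z+1$ of the indefinite sum and through the parameter sitting inside the summand — to confirm that the type is exactly $\alpha_f+2\alpha_g$, and controlling the $\Re(z)$-growth uniformly enough that the recursion argument closes.
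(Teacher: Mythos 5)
Your proposal is correct and follows essentially the same route as the paper: part (1) via a growth estimate obtained by reducing to a unit strip through the recursion and bounding the summand (the paper's own decomposition), and parts (2)--(6) by comparing both sides at the naturals and invoking Lemma \ref{lma4}, with part (1) supplying membership in $\E$. The only variations are local: in (5) you spell out the formal-power-series integral-domain argument that the paper merely asserts; in (6) you verify the integer-$z$ case by Chu--Vandermonde for complex $s_1,s_2$ and apply Lemma \ref{lma4} in $z$, whereas the paper derives the integer-$s$ case from $1\times\cdots\times 1$ and associativity and then applies Lemma \ref{lma4} in $s_1$ and $s_2$; and your displayed bound in (1) drops the $\rho_f|\Re(s)|$ contribution (the paper keeps $\rho_f+\rho_g$ in the real-type exponent), which is harmless since membership in $\E$ only requires some finite real type.
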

\begin{proof}

We prove each result one by one.

\begin{enumerate}

\item First take $f\times g = \sum_{z-\lfloor\Re(z)\rfloor} f(s-1)g(z+1-s)\,\T_s + \sum_{j=1}^{\lfloor \Re(z) \rfloor} f(z-\lfloor \Re(z) \rfloor-1+j)g(\lfloor \Re(z) \rfloor + 1 - j)$. Therefore since $|f(s-1)g(z + 1-s)| < C e^{\alpha_f |\Im(s)| + \alpha_g(|\Im(s)| + |\Im(z)|) + (\rho_f + \rho_g)|\Re(s)|}$ we see that $|\sum_{z} f(s-1) g(z +1 - s) \,\T_s| < C e^{(\alpha_f + 2\alpha_g)|\Im(z)| + (\rho_f + \rho_g + \epsilon)|\Re(z)|}$. Where we treated $C e^{\alpha_g |\Im(z)|}$ as a constant independent of the variable $s$ we are summing across pulled through the integral.

\item For the second, observe $f \times g$ and $g \times f$ are defined and if $n\in \mathbb{N}$, that $(f \times g)(n) = (g \times f)(n)$, so the result follows by Lemma \ref{lma4}.

\item For the third, by Lemma \ref{lma4} the result follows.

\item For the fourth, we observe the linearity of the indefinite sum and multipication.

\item For the fifth, we use Lemma \ref{lma4} again. As in $\sum_{j=0}^n f(j)g(n-j) = 0$ for all $n$ implies $f(n) = 0$ or $g(n) = 0$ for all $n$.

\item For the sixth case, observe that $f \times 1 = \sum_{z+1} f(s-1)\, \T_s$ so that $\frac{1}{(n-1)!}(z+1)_{n-1} = 1 \times 1 \times 1 \times ...(n\, times)... \times 1$. By associativity the result follows for integers so that $\frac{1}{(n-1)!}(z+1)_{n-1} \times \frac{1}{(m-1)!}(z+1)_{m-1} = \frac{1}{(n+m-1)!}(z+1)_{n+m-1}$. Now apply Lemma \ref{lma4} on $s_1$ and $s_2$ since $\frac{\G(z+s_{12})}{\G(z+1)\G(s_{12})} \in \E$ in $s_1,s_2$ when $\Re(z),\Re(s_1),\Re(s_2) > 0$.  The result follows from this.
\end{enumerate}

This shows all the cases.
\end{proof}

With this we are prepared to show a new representation of complex iterations of the indefinite sum. To this end we define a new operator called the differsum. This term is novel to us and is used to express the operators similarity to the differintegral.

\begin{definition}\label{df9}
Let $\phi(z)$ be holomorphic in the open set $\Omega$.  A differsum $\T^s_z$ of $\phi$ is holomorphic in $s$ on open $\Omega'$ and holomorphic in $z$ for $z\in \Omega$, satisfying:
\begin{enumerate}
\item For $n \in \mathbb{N}$ we have $\T_z^n \phi = \T_z \T_z \dots (n\,times) \dots \T_z \phi$ and $\T_z^{-n} \phi = \sum_z \sum_z \dots (n\,times) \dots \sum_z \phi$.
\item For $s_0,s_1, s_0 + s_1 \in \Omega'$, $\T_z^{s_0}\T_z^{s_1} \phi = \T_z^{s_0 + s_1} \phi$.
\end{enumerate}
\end{definition}

The definitive properties of the differsum is what we would expect of an iterate of the backwards difference operator/indefinite sum. It appears as an object that would be very difficult to analyze. Thankfully though, the deep connection between recursion and our differintegral allows us to speak freely of the differsum. 

\begin{theorem}\label{theom4}
Suppose $f(z) \in \E$ and $q \in \mathbb{C},\,\Re(q) > 0$, then $\T_{z}^{-q} f(z) = \frac{1}{\G(q)} \sum_{z} f(s) (z+1-s)_{q-1}\,\T_s$.
\end{theorem}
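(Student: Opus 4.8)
The plan is to verify that the claimed formula satisfies the two defining properties of a differsum (Definition \ref{df9}) and that it is holomorphic in $q$ and $z$ on the relevant domains, then invoke the uniqueness that Lemma \ref{lma4} buys us. First I would check that the right-hand side $\frac{1}{\G(q)} \sum_z f(s)(z+1-s)_{q-1}\,\T_s$ is well defined: for $f \in \E$ the factor $(z+1-s)_{q-1} = \frac{\G(z+q-s)}{\G(z+1-s)}$ has controlled growth in $\Im(s)$ by the Gamma asymptotics \eqref{asym}, so the summand lies in $\E$ as a function of $s$ (for fixed $z$, $q$ with $\Re(q)>0$ and $\Re(z)$ large enough), and Theorem \ref{thm1} applies to produce the indefinite sum. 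Holomorphy in $q$ follows from holomorphy of $1/\G(q)$ and of $(z+1-s)_{q-1}$ in $q$ together with the uniform convergence of the integral transform; holomorphy in $z$ is similar.

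The main computation is checking property (1): that for $n \in \mathbb{N}$ the formula reproduces $\sum_z \dots (n\text{ times}) \dots \sum_z f$. Here the natural move is to evaluate everything at the positive integers and use Lemma \ref{lma4}. At $z = m \in \mathbb{N}$, $\frac{1}{\G(q)}\sum_m f(s)(m+1-s)_{q-1}\,\T_s = \frac{1}{\G(q)}\sum_{j=1}^m f(j)(m+1-j)_{q-1} = \frac{1}{\G(q)}\sum_{k=0}^{m-1} f(m-k)\frac{\G(q+k)}{\G(k+1)}$, and for $q = n$ this is $\sum_{k=0}^{m-1}\binom{n+k-1}{k}f(m-k)$, which is exactly the value of the $n$-fold iterated indefinite sum of $f$ at $m$ (the $n$-fold partial-sum operator has this hockey-stick kernel). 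Since both sides lie in $\E$ and agree on $\mathbb N$, Lemma \ref{lma4} forces equality everywhere in the right half plane. For the negative integers $q = -n$ one checks directly that $\frac{1}{\G(-n)}(\cdots)$ degenerates — $1/\G(-n)=0$ — and that the limiting expression recovers $\T_z^n f$; alternatively one treats $q$ with $\Re(q) > 0$ only, as the statement does, and records property (1) for positive $q$-values via the convolution identity.

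Property (2), the semigroup law $\T_z^{-q_0}\T_z^{-q_1} f = \T_z^{-(q_0+q_1)} f$ for $\Re(q_0),\Re(q_1) > 0$, is where the convolution machinery of Section \ref{sec4} does the real work. Observe that $\T_z^{-q} f$ is, in the notation of Definition \ref{df8}, precisely the convolution $\frac{1}{\G(q)}(z+1)_{q-1} \times f$ up to the index shift built into $\times$; then applying $\T_z^{-q_0}$ to $\T_z^{-q_1} f$ amounts to convolving with $\frac{1}{\G(q_0)}(z+1)_{q_0-1}$, and by the associativity (Theorem \ref{theom3}(3)) and the key product rule (Theorem \ref{theom3}(6)), $\frac{1}{\G(q_0)}(z+1)_{q_0-1} \times \frac{1}{\G(q_1)}(z+1)_{q_1-1} = \frac{1}{\G(q_0+q_1)}(z+1)_{q_0+q_1-1}$, which collapses the composition to $\T_z^{-(q_0+q_1)} f$. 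The hard part will be bookkeeping the exponential-type side conditions: the convolution identities in Theorem \ref{theom3} carry hypotheses like $\alpha_f + 2\alpha_g < \pi/2$, and since $\frac{1}{\G(q)}(z+1)_{q-1}$ has imaginary-type exactly $0$ (the Gamma quotient only contributes polynomial-times-$e^{0}$ growth on vertical lines), these conditions should reduce to $\alpha_f < \pi/2$, i.e. exactly membership in $\E$ — but verifying that the nested convolution stays inside the admissible type range at each step, and that the formula first derived for $q \in \mathbb N$ then extends to all $\Re(q) > 0$ by Lemma \ref{lma4} applied in the variable $q$, is the delicate accounting that needs care.
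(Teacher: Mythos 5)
Your proposal is correct and follows essentially the same route as the paper: both identify $\T_z^{-q}f$ with the convolution $\frac{1}{\G(q)}\,f \times (z+1)_{q-1}$, settle the integer case by matching values on $\mathbb{N}$ (your hockey-stick kernel computation versus the paper's induction via repeated convolution with $1$, which amounts to the same thing), and obtain the semigroup law from associativity together with Theorem \ref{theom3}(6), with Lemma \ref{lma4} doing the identification. The only caveat is your closing remark about ``extending in $q$ by Lemma \ref{lma4}'' -- that step is unnecessary and not quite meaningful, since the correct logic (which you state at the outset) is simply that the candidate formula satisfies the defining properties of Definition \ref{df9}.
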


\begin{proof}
We make use of the fact $\T_{z+1}^{-q} f(z-1) = \frac{1}{\G(q)} (f(z) \times (z+1)_{q-1}) (z)$, where this is well defined because $f \in \E$ and because $(z+1)_{q-1} \in \E$ in $z$ and as the imaginary part of $z$ grows it is bounded by $M_\epsilon e^{\epsilon |\Im(z)|}$ for any $\epsilon > 0$. The proof of this is by induction for $q$ an integer. It is clearly true for $q=n=1$. Assume for $n$. Since $\frac{1}{(n-1)!}(z+1)_{n-1} \times 1 = \frac{1}{n!}(z+1)_{n}$ and since $\T_{z+1}^{-(n+1)} f(z-1) =\T_{z+1}^{-n} f(z-1) \times 1 = (f(z) \times \frac{1}{(n-1)!}(z+1)_{n-1}) \times 1 = f(z) \times (\frac{1}{(n-1)!}((z+1)_{n-1} \times 1)) = \frac{1}{n!}f(z) \times (z+1)_n$. Now since, for $q_1, q_2 \in \mathbb{C}$, we have $\frac{1}{\G(q_1)}(z+1)_{q_1-1} \times \frac{1}{\G(q_2)}(z+1)_{q_2-1} = \frac{1}{\G(q_1 + q_2)}(z)_{q_1+q_2-1}$ we know that $\T_{z+1}^{-q_1} \T_z^{-q_2} f(z-1) = \T_{z+1}^{-(q_1+q_2)} f(z-1)$. Therefore removing the transfers up in $z$ we have for $\Re(q) > 0$,

$$ \T_z^{-q} f(s) = \frac{1}{\G(q)} \sum_z f(s) (z+1-s)_{q-1}\,\T_s$$
\end{proof}

To close we make use of our expression for the differsum. We take the transform on $\log(1+s)$. This will produce a generalization of the Gamma function. This comes about in the following sense, $\sum_z \log(1+s)\,\T_s = \log(\Gamma(1+z))$, which follows by Lemma \ref{lma4}. Exponentiate and we have a well defined expression for $\Gamma$. Furthermore the iterates of the indefinite sum once exponentiated will produce functions $\G_0(z) = z, \G_1 (z) = \G(z), \G_2(z), \G_3(z), ...$ such that $\G_n(z) \G_{n+1}(z) = \G_{n+1}(z+1)$. These functions are normalized at one so that $\G_n(1) = 1$.

We wish to be more brazen than this however and to develop the complex iterates of this recursion. Namely for $z,q \in \mathbb{C}$ we have if $\Re(q) > 0$ and $\Re(z) > 1$ that the function 
$$\G_q(z) = e^{\frac{1}{\G(q)} \sum_{z-1} \log(1+s) (z-s)_{q-1} \, \T_s}$$
satisfies the recursion $\G_q(z) \G_{q+1}(z) = \G_{q+1}(z+1)$. This function expands the $\Gamma$ function considerably and allows for a broad generalization of the Barnes G-function. In spirit, $\G_2$ satisfies the same recursion as the Barnes G-function--we are unsure if the two agree however.

\section{Final Remarks}\label{sec5}

\setcounter{section}{5}
\setcounter{equation}{0}\setcounter{theorem}{0}

We close hoping the reader has seen the connection between fractional calculus and recursion. The indefinite sum behaves quite neatly and we have only scratched the surface of its structure and uses. We state that this paper is sister to \cite{ref12}--on the differintegral and complex iterations. The two problems appear as an intrigue in iteration and show the power of fractional calculus and its familiarity and connection with iteration in general. For a more clear application of this, observe \cite{ref12}. We are satisfied with the brevity of this paper as it is meant to show a pure mathematical curiousity which follows through fractional calculus.

Although only briefly mentioned, our generalization of the Gamma function offers many questions. Can this functions be analytically continued to a meromorphic function for $q \in \mathbb{C}$ and $z \in \mathbb{C}$? What is its Weierstrass factorization in accordance to its multiplicative inverses zeroes? Explicitly, what is the Weierstrass factorization of $\frac{1}{\G_q(z)}$ in $z$ dependent on $q$? 

We are aware of more advanced techniques that apply to more complicated recursions. Many linear algebraic recursions in $T f(z) = f(z+1)$ can be solved by these methods. Notably we solved for complex values of the recursion $\frac{1}{1 - T^{-1}}$. We can do this for more complicated recursions by creating a similar isomorphisms between spaces that the Twisted Mellin transform induces. We can also perform fractional iterations of the recursion in a manner similar to how we produced the differsum. These results are very exotic and further capitulate the importance of the differintegral. 

\section{Acknowledgements}
The author is indebted to the University of Toronto where he is a student who makes constant use of their mathematics library, of which these results would've never come to fruition without.

\end{document}